\documentclass[11pt]{amsart}
\usepackage{amssymb}
\newcommand{\no}[1]{#1}

\renewcommand{\no}[1]{}

\no{\usepackage{times}\usepackage[
subscriptcorrection, slantedGreek, 
nofontinfo]{mtpro}
\renewcommand{\Delta}{\upDelta}
}

 \setlength{\marginparwidth}{0.6in}

\date{\today}

\setlength{\oddsidemargin}{0.0in}
\setlength{\evensidemargin}{0.0in}
\setlength{\textwidth}{6.5in}
\setlength{\topmargin}{0.0in}
\setlength{\textheight}{8.5in}

\newtheorem{theorem}{Theorem}
\newtheorem{proposition}{Proposition}

\newtheorem{corollary}{Corollary}

\theoremstyle{remark}
\newtheorem{remark}{Remark}

\DeclareMathOperator{\supp}{supp}

\DeclareMathOperator{\dist}{dist}

\newcommand{\eps}{\varepsilon}

\newcommand{\R}{{\bf R}}
\newcommand{\Id}{\mbox{Id}}
\renewcommand{\r}[1]{(\ref{#1})}
\newcommand{\PDO}{$\Psi$DO}
\newcommand{\be}[1]{\begin{equation}\label{#1}}
\newcommand{\ee}{\end{equation}}

\renewcommand{\d}{\mathrm{d}}
\renewcommand{\i}{\mathrm{i}}
\newcommand{\bo}{\partial \Omega}

\title[Instability of the linearized problem in multiwave tomography]{Instability of the linearized problem in  multiwave tomography  of  recovery both the source and the speed}

\author[P. Stefanov]{Plamen Stefanov}
\address{Department of Mathematics, Purdue University, West Lafayette, IN 47907}
\thanks{First author partly supported by a NSF  Grant DMS-0800428}

\author[G. Uhlmann]{Gunther Uhlmann}
\address{Department of Mathematics, University of Washington, Seattle, WA 98195}
\thanks{Second author partly supported by NSF and  a Walker Family Endowed Professorship}

\begin{document}

\begin{abstract}
In this paper we consider the linearized problem of recovering both the sound speed and the thermal absorption arising in thermoacoustic and photoacoustic tomography. We show that the problem is unstable in any scale of Sobolev spaces.
\end{abstract}

\maketitle

\section{Introduction}  
In multiwave tomography, one sends one type of wave to the body of a patient, most often electromagnetic (thermoacoustic tomography) or optical radiation (photoacoustic tomography) which interacts with the tissue, and measures the acoustic signal on the boundary generated by this interaction. This combines the high contrast of the incoming waves with the high resolution of the measured ultrasound ones. The mathematical model of the emitted ultrasound wave is the following. 
Let $u$ solve the problem
\begin{equation}   \label{1}
\left\{
\begin{array}{rcll}
(\partial_t^2 -c^2\Delta)u &=&0 &  \mbox{in $(0,T)\times \R^n$},\\
u|_{t=0} &=& f,\\ \quad \partial_t u|_{t=0}& =&0, 
\end{array}
\right.               
\end{equation}
where the sound speed $c=c(x)>0$ and $T>0$ are fixed. 
Assume that $f$ and $c-1$ are supported in $\bar\Omega$, where $\Omega\subset \R^n$ is some  bounded domain with a smooth convex boundary. The measurements are modeled by the operator
\be{1b}
\Lambda_1 f : = u|_{[0,T]\times\partial\Omega}.
\ee
The first step  in multiwave imaging is to recover $f$ given $\Lambda_1f$. The speed $c$ is usually assumed to be known but in practice, it is not. Then the natural question is whether we can recover both $c$ and $f$. The answer is still unknown. A discussion of this problem, together with a partial local result stating that $c$ can be determined up to a constant scaling  can be found in \cite{HristovaKu08}. David Finch observed a link between this problem and the transmission eigenvalues. 

The problem of recovery of $f$, given $c$ has received a lot of attention in the past years. We refer to \cite{AgrKuchKun2008,Gaik_2010, FinchHR_07, finchPR, Finch_Rakesh_2006,FinchRakesh08, Haltmeier04, Haltmeier05, Hristova08,HristovaKu08, Kruger03,Kruger99,KuchmentKun08, Patch04,QSUZ_skull, SU-thermo, SU-thermo_brain,XuWang06,XuKA} for some works in this direction. If $c$ is constant, then the inversion of $\Lambda_1$ is actually an integral geometry problem as well. For variable $c$, there is always uniqueness if $T\gg1$, and there is stability if $c$ is non-trapping and $T\gg1$. We refer to \cite{SU-thermo, S-U-InsideOut10} for details. 

When an inaccurate speed is used for reconstruction by time reversal, the resulting images are distorted, and full of ``artifacts'', see for example the images in \cite{Jose2012} or \cite{HristovaKu08}. The mathematical structure of the artifacts is easy to understand: the forward map $\Lambda_1$ is a Fourier Integral Operator (FIO) with a canonical relation given by the graph of the map determined by the geodesics rays from the interior to the boundary, see \cite{SU-thermo} for more details. The time reversal is another FIO with a canonical relation given by the graph of the inverse of the former one. When the speed is different, we get that time reversal with a wrong speed is an FIO with a canonical relation not the diagonal (the graph of the identity) but the graph of the composition of the forward one and a backward one with a different speed. In particular, since the canonical relation of $\Lambda_1$ and its reversal have two disconnected components, one can see double images of the same singularity, well visible in the examples in \cite{Jose2012}, for instance. An algorithm to tune in the speed by maximizing  the sharpness of the reconstructed $f$ is proposed in \cite{Treeby1011}. This is related to the FIO description of the artifacts but  good mathematical understanding of this algorithm is lacking. 

One of the ways to recover the speed is to take additional measurements and to recover $c$ from travel times. This is the method proposed in \cite{XW} in thermoacoustic tomography. The travel time problem is stable under some geometric assumptions on $c$  ($c^{-2}\d x^2$ being a simple metric in the domain is enough) which are satisfied when $c$ is close enough to a constant, in particular, see, e.g., \cite{Sh-book} and the references there. Then $f$ can be recovered stably as well.
Additional data can be provided either by placing ultrasound sources around the body, or by placing passive absorbing (tissue imitating) objects around the body which become ultrasound sources by the thermo- or the photo-acoustic effect. We refer to the recent paper \cite{Jose2012} for references and numerical and experimental implementations of that method. As can be expected, the results are very good. 
Simultaneous reconstruction of $f$ and $c$ aside from the above mentioned work \cite{Treeby1011}, have been tried  with various success  in \cite{Yuan2006,Yuan:09,Zhang_2006}, for example. 

In this paper, we study the linearization $\delta\Lambda_1$  and we show that the latter is unstable. In particular, we prove the following. 

\begin{theorem}\label{thm_lin}
There is no stability estimate of the type
\[
\|\delta f\|_{H^{s_1}(\Omega)} + \|\delta c^2\|_{H^{s_1}(\mathcal{K})}\le C\big\|\delta\Lambda_1  \{\delta f,\delta c^2\} \big\|_{H^{s_2}},
\]
$s_1\ge0$, $s_2\ge0$, regardless of $s_1$, $s_2$. 
\end{theorem}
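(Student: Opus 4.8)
The plan is to disprove the estimate by exhibiting a sequence of perturbations $\{\delta f_\lambda,\delta c^2_\lambda\}$, indexed by a frequency $\lambda\to\infty$, whose left-hand side grows like $\lambda^{s_1}$ while the data $\delta\Lambda_1\{\delta f_\lambda,\delta c^2_\lambda\}$ is $O(\lambda^{-N})$ for every $N$; this kills any fixed constant $C$ simultaneously for all pairs $(s_1,s_2)$. I linearize \r{1} about a smooth, non-trapping background speed $c_0$ (so that recovery of $\delta f$ alone is stable for $T\gg 1$, which is the interesting regime) and a smooth source $f_0$, with background solution $u_0$. The perturbation $v=\delta u$ then solves
\[
(\partial_t^2-c_0^2\Delta)v=(\delta c^2)\,\Delta u_0\ \text{ in }(0,T)\times\R^n,\qquad v|_{t=0}=\delta f,\quad \partial_t v|_{t=0}=0,
\]
and $\delta\Lambda_1\{\delta f,\delta c^2\}=v|_{\boo}$.

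The key microlocal observation is that the forcing $(\delta c^2)\Delta u_0$ has wave-front set contained in $\{\tau=0\}$, since $\delta c^2$ is independent of $t$ and $u_0$ is smooth; this is disjoint from the characteristic variety $\tau^2=c_0^2|\xi|^2$ of the background operator. Consequently there is a \emph{confined} parametrix: I solve $(\partial_t^2-c_0^2\Delta)V=(\delta c^2)\Delta u_0$ modulo $C^\infty$ by the elliptic Neumann series $V=\sum_{k\ge0}V^{(k)}$ with $V^{(0)}=(-c_0^2\Delta)^{-1}[(\delta c^2)\Delta u_0]$ and $V^{(k)}=(-c_0^2\Delta)^{-1}\partial_t^2 V^{(k-1)}$, where $(-c_0^2\Delta)^{-1}$ is a microlocal parametrix near the spatial frequencies of $\delta c^2$. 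Because this parametrix is pseudolocal, $\WF(V)$ — and in particular the singular support of $V$ — stays inside $\supp\delta c^2$; choosing $\supp\delta c^2\subset\subset\Omega$ makes $V$ smooth and $O(\lambda^{-\infty})$ near the boundary.

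I then \emph{define} $\delta f:=V|_{t=0}$ (suitably truncated). The structural point that makes this work is that $u_0$ is even in $t$: it has vanishing initial velocity in \r{1}, so $u_0(-t)=u_0(t)$ by uniqueness, whence every odd $t$-derivative of $u_0$, and therefore of $V$, vanishes at $t=0$; in particular $\partial_t V|_{t=0}=O(\lambda^{-\infty})$. This is crucial because \r{1} \emph{forces} $\partial_t v|_{t=0}=0$ and gives us no freedom in the velocity: the evenness supplies for free the cancellation of the velocity Cauchy data that we could not otherwise arrange. The remainder $r=v-V$ then solves the background equation with a $C^\infty$, $O(\lambda^{-\infty})$ right-hand side and Cauchy data $(O(\lambda^{-\infty}),O(\lambda^{-\infty}))$; a loss-free energy estimate gives $r=O(\lambda^{-\infty})$ in every $H^s$ up to the boundary. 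Hence $v|_{\boo}=V|_{\boo}+r|_{\boo}=O(\lambda^{-\infty})$.

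To finish, take $\delta c^2_\lambda$ an $L^2$-normalized wave packet of frequency $\lambda$ supported in a fixed $K\subset\subset\Omega$, so $\|\delta c^2_\lambda\|_{H^{s_1}}\sim\lambda^{s_1}$, while $\delta f_\lambda=V|_{t=0}$ has size $O(\lambda^{s_1-2})$ in $H^{s_1}$ because $(-c_0^2\Delta)^{-1}$ gains two derivatives. Thus the left-hand side is $\gtrsim\lambda^{s_1}\to\infty$ while the right-hand side is $O(\lambda^{-\infty})$, contradicting the claimed estimate for any $C$, $s_1$, $s_2$. I expect the main technical obstacle to be the rigorous construction of the confined parametrix and, above all, the uniform-in-$\lambda$ control of the remainder $r$ up to the boundary: one must verify that the Neumann series can be Borel-summed to a genuine $O(\lambda^{-\infty})$ forcing error, that the boundary trace of $V$ itself is negligible, and that the energy estimate loses no powers of $\lambda$. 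The evenness of $u_0$ in $t$, used to annihilate the velocity data, is the linchpin of the whole argument.
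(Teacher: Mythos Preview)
Your approach is correct and shares the same core idea as the paper: on an infinite-dimensional subspace where $\delta f$ is chosen to cancel the elliptic part of the Duhamel term generated by $\delta c^2$, the linearized map is smoothing to arbitrarily high order, which destroys any Sobolev stability estimate. The paper, however, implements this via an \emph{exact} operator decomposition rather than a microlocal parametrix: using Duhamel and repeated integration in $t$ together with Proposition~\ref{pr_12} (the identities linking $\Lambda_1$ and $\Lambda_2$), it shows
\[
\delta\Lambda_1\{\delta f,\delta c^2\}=\Lambda_1\bigl(\delta f+Q_N(c^{-2}\delta c^2)\bigr)+R_N(c^{-2}\delta c^2),
\]
where $Q_N h=\sum_{k=1}^N(c^2\Delta_D)^{-k}\bigl(h\,(c^2\Delta)^k f\bigr)$ uses the honest Dirichlet inverse and $R_N:H^s_0(\mathcal{K})\to H^{s+2N+2}$ exactly. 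Your Neumann series $V^{(k)}$ restricted to $t=0$ is precisely this $Q_N$, and your evenness observation $\partial_t V|_{t=0}=0$ is exactly the role played by $\partial_t u|_{t=0}=0$ in the paper's integrations by parts. The gain from the paper's route is that the technicalities you flag disappear: no Borel summation is needed (one fixes $N$ large), no truncation of $\delta f$ is needed (the Dirichlet inverse lands in $H_D(\Omega)$ automatically), and the smoothing bound on the remainder comes from the explicit Duhamel representation rather than from a separate energy estimate on $r$. Your parametrix viewpoint is more portable to settings without a convenient global inverse, but here the exact identity is both shorter and avoids the cutoff and boundary-trace bookkeeping.
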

We also show that a conditional type of stability estimate cannot hold either, see Remark~\ref{rem1}. 
This suggests instability of the non-linear problem as well but does not imply it directly. Stability of the linearization in some Sobolev norms, even if    not in the sharp ones, does imply (conditional H\"older)    stability of the non-linear problem \cite{SU-JFA09}. The converse however is a much more delicate question. 
To prove the instability of the linearization, we show that the latter is a smoothing operator for $(\delta f,\delta c^2)$ belonging to an explicitly defined infinite dimensional linear space, see \r{V1}. We refer to section~\ref{sec_4} for more details. 

\section{Preliminaries}
Notice first that $c^2\Delta$ is formally self-adjoint w.r.t.\ the measure $c^{-2}\d x$.   
Given a domain $U$, and a function $u(t,x)$, define the energy
\[
E_U(u,t) = \int_U\left( |\nabla_x u|^2   +c^{-2}|u_t|^2 \right)\d x.
\]
In particular, we define the space $H_{D}(U)$ to be the completion of $C_0^\infty(U)$ under the Dirichlet norm
\be{2.0H}
\|f\|_{H_{D}}^2= \int_U |Du|^2 \,\d x.
\ee
It is easy to see that $H_{D}(U)\subset H^1(U)$, if $U$ is bounded with smooth boundary, therefore, $H_{D}(U)$ is topologically equivalent to $H_0^1(U)$. If $U=\R^n$, this is true for $n\ge3$ only.  By the finite speed of propagation, the solution with compactly supported Cauchy data always stays in $H^1$ even when $n=2$.  
The energy norm for the Cauchy data $[f_1,f_2]$, that we denote by $\|\cdot\|_{\mathcal{H}}$ is then defined by
\[
\|[f,f_2]\|^2_{\mathcal{H}} = \int_U\left( |\nabla_x f_1|^2    +c^{-2}|f_2|^2 \right)\d x.
\]
This defines the energy space 
\[
\mathcal{H}(U) = H_D(U)\oplus L^2(U).
\] 
Here and below, $L^2(U) = L^2(U; \; c^{-2}\d x)$. Note also that 
\be{Pf}
\|f\|^2_{H_D} = (-c^2\Delta f,f)_{L^2}.
\ee
The wave equation then can be written down as the system
\be{s1}
\mathbf{u}_t= \mathbf{P}\mathbf{u}, \quad \mathbf{P} = \begin{pmatrix} 0&I\\P&0 \end{pmatrix},\quad P := c^2\Delta,
\ee
where $\mathbf{u}=[u,u_t]$ belongs to the energy space $\mathcal{H}$. The operator $\mathbf{P}$ then extends naturally to a skew-selfadjoint operator on $\mathcal{H}$ if $c\in L^\infty$, and $c^{-1}\in L^\infty$. In this paper, we will deal with either $U=\R^n$ or $U=\Omega$. In the latter case, the definition of $H_D(U)$ reflects Dirichlet boundary conditions.

We generalize next the results in \cite{SU-thermo} to the inverse problem with general Cauchy data $(f_1,f_2)$ in \r{1} with $g$ not necessarily zero. What we really need later is Proposition~\ref{pr_12} only. 
Let $u$ solve the problem
\begin{equation}   \label{1'}
\left\{
\begin{array}{rcll}
(\partial_t^2 -c^2\Delta)u &=&0 &  \mbox{in $(0,T)\times \R^n$},\\
u|_{t=0} &=& f_1,\\ \quad \partial_t u|_{t=0}& =&f_2, 
\end{array}
\right.               
\end{equation}
where $T>0$ is fixed. Set $\mathbf{f}=[f_1,f_2]$. Then for $\mathbf{f}\in\mathcal{H}$, we have $\mathbf{u}\in C([0,T]; \; \mathcal{H})$. 

Assume that $f$ is supported in $\bar\Omega$, where $\Omega\subset \R^n$ is some smooth bounded domain. Set
\be{l1}
{\Lambda} \mathbf{f} : = u|_{[0,T]\times\partial\Omega}.
\ee
The trace $\Lambda \mathbf{f}$ is well defined in $C_{(0)}\big([0,T]; \; H^{1/2}(\bo)\big)$, where the subscript $(0)$ indicates that we take  the subspace of functions $h$ so that $h=0$ for $t=0$. For a discussion of other mapping properties, we refer to \cite{Isakov-book}. When $\mathbf{f}$ is restricted to functions in $\mathcal{H}$, supported in a fixed compact $\mathcal{K}\subset\Omega$, then $\Lambda$ is an FIO with a canonical relation of graph type, and maps $\mathbf{f}$ continuously into $H_{(0)}^1\left([0,T]\times\bo\right)$, see \cite{SU-thermo}. 

Given $h$, let $v$ solve
\begin{equation}   \label{l2}
\left\{
\begin{array}{rcll}
(\partial_t^2 -c^2\Delta)v &=&0 &  \mbox{in $(0,T)\times \Omega$},\\
v|_{[0,T]\times\partial\Omega}&= &h,\\
v|_{t=T} &=& \phi,\\ \quad   \partial_t v|_{t=T}& =&0, \\
\end{array}
\right.               
\end{equation}
where $\phi$ solves the elliptic boundary value problem
\be{l3}
\Delta\phi=0, \quad 
\phi|_{\partial\Omega} = h(T,\cdot).
\ee 
Then we define the following pseudo-inverse
\be{l4}
\mathbf{A} h := [v(0,\cdot),v_t(0,\cdot)] \quad \mbox{in $\bar\Omega$}.
\ee
By  \cite{LasieckaLT},
\[
\mathbf{A} : H^1_{(0)}([0,T]\times \bo) \to \mathcal{H}  \cong H_0^1(\Omega)\times L^2(\Omega)
\]
is a continuous map. Note that the mapping properties above allow us to apply $\mathbf{A}$ to $\Lambda\mathbf{f}$ only when $\mathbf{f}$ is compactly supported in $\Omega$ but the theorem above shows that $\mathbf{A}\Lambda$ extends continuously to the whole $\mathcal{H}$. 

Let $T(\Omega)$ be the length of the longest geodesic in $\bar\Omega$, when $(\Omega, c^{-2}dx^2)$ is non-trapping. 

\begin{theorem}  \label{thm2.1} Let $(\Omega, c^{-2}dx^2)$ be non-trapping, and let $T>T(\Omega)$. Then 
$\mathbf{A}\Lambda=\textbf{Id}-\mathbf{K}$, where $\mathbf{K}$ is compact in $\mathcal{H}(\Omega)$, and   $\|\mathbf{K}\|_{\mathcal{H}(\Omega)}<1$. 
In particular, $\textbf{Id}-\mathbf{K}$ is invertible on $\mathbf{H}(\Omega)$, and $\Lambda$  has an explicit left inverse of the form
\be{2.2}
\mathbf{f} = \sum_{m=0}^\infty \mathbf{K}^m \mathbf{A} h, \quad h:= \Lambda\mathbf{ f}.
\ee
\end{theorem}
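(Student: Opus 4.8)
The plan is to follow the classical control-theoretic/energy argument that goes back to Lax--Phillips and was adapted in \cite{SU-thermo}, now for general Cauchy data. The key identity is that $\mathbf{A}$ is built so that $v$ in \r{l2} agrees with $u$ in \r{1'} on the boundary cylinder $\boo$, solves the same equation, but is pinned at $t=T$ by the harmonic extension $\phi$ rather than by $u(T,\cdot)$. First I would establish the energy estimate that makes the whole scheme work: comparing $u$ and $v$, their difference $w=u-v$ solves the homogeneous wave equation in $(0,T)\times\Omega$ with zero boundary data, and $w|_{t=T}=u(T,\cdot)-\phi$, $w_t|_{t=T}=u_t(T,\cdot)$. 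Running the energy backward from $t=T$ to $t=0$ and using finite speed of propagation together with $T>T(\Omega)$, I would show that $\mathbf{A}\Lambda\mathbf{f}=\mathbf{f}-\mathbf{K}\mathbf{f}$ where $\mathbf{K}\mathbf{f}$ records exactly the information lost by replacing $u(T,\cdot)$ with its harmonic extension.

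Next I would identify $\mathbf{K}$ explicitly and prove $\|\mathbf{K}\|_{\mathcal{H}(\Omega)}<1$. The natural candidate is the map sending $\mathbf{f}$ to the Cauchy data at $t=0$ of the solution of the wave equation whose data at $t=T$ is $[\,u(T,\cdot)-\phi,\,0\,]$ propagated with Dirichlet boundary conditions; equivalently $\mathbf{K}$ is expressed through the harmonic projection of $u(T,\cdot)$. The strict contraction estimate is where the hypothesis $T>T(\Omega)$ enters decisively: because $T$ exceeds the length of the longest geodesic, every singularity launched from $\supp\mathbf{f}\subset\mathcal{K}\subset\Omega$ has reached $\partial\Omega$ and been captured by $\Lambda\mathbf{f}$ before time $T$, so the ``leftover'' energy at $t=T$ is strictly smaller than the total. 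I would make this quantitative via an energy (Poincar\'e/unique-continuation) inequality showing that the harmonic extension $\phi$ of $u(T,\cdot)|_{\partial\Omega}$ captures a definite positive fraction of the Dirichlet energy, forcing the norm of the complementary part to be $<1$.

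Compactness of $\mathbf{K}$ then follows from the regularizing nature of the construction: the harmonic extension in \r{l3} and the solution operator of the backward mixed problem \r{l2} gain a derivative relative to the trace, so $\mathbf{K}$ factors through a space embedding compactly into $\mathcal{H}(\Omega)$; here I would invoke the $H^1$ mapping property of $\Lambda$ and the continuity of $\mathbf{A}$ from \cite{LasieckaLT}, composed with the compact Rellich embedding $H^{s}\hookrightarrow H^{s-\epsilon}$. Once $\mathbf{K}$ is compact with $\|\mathbf{K}\|<1$, the operator $\mathbf{Id}-\mathbf{K}$ is invertible by the Neumann series, and since $\mathbf{A}\Lambda=\mathbf{Id}-\mathbf{K}$, applying $(\mathbf{Id}-\mathbf{K})^{-1}=\sum_{m\ge0}\mathbf{K}^m$ to $\mathbf{A}h$ recovers $\mathbf{f}$, giving \r{2.2}.

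I expect the main obstacle to be the strict inequality $\|\mathbf{K}\|_{\mathcal{H}(\Omega)}<1$ rather than the mere bound $\le 1$. The bound $\le1$ is essentially an energy-conservation statement, but strictness requires ruling out a sequence $\mathbf{f}_j$ with $\|\mathbf{K}\mathbf{f}_j\|/\|\mathbf{f}_j\|\to1$; this is where non-trapping and $T>T(\Omega)$ must be used together with the compactness of $\mathbf{K}$ to pass to a limit and derive a contradiction via unique continuation (Tataru-type) for the wave equation. Handling the boundary regularity carefully — ensuring the traces live in exactly the spaces for which \cite{LasieckaLT} guarantees continuity of $\mathbf{A}$, and that the harmonic extension $\phi$ does not destroy the needed compatibility at $t=T$ — is the technical point that will need the most care.
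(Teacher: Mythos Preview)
Your overall architecture is right and matches the paper closely, but the compactness argument has a genuine gap. You claim that $\mathbf{K}$ is compact because ``the harmonic extension in \r{l3} and the solution operator of the backward mixed problem \r{l2} gain a derivative relative to the trace,'' and propose to factor through $\Lambda$, $\mathbf{A}$, and Rellich. This cannot work: $\mathbf{A}\Lambda$ is \emph{not} compact (it equals $\mathbf{Id}-\mathbf{K}$, close to the identity), and the backward Dirichlet propagator from $t=T$ to $t=0$ is unitary in $\mathcal{H}(\Omega)$, so it gains nothing. Compactness does not come from any regularizing property of $\mathbf{A}$ or of the harmonic extension. The correct mechanism, which you allude to elsewhere in your proposal but then abandon at the crucial moment, is propagation of singularities: because $T>T(\Omega)$, every bicharacteristic launched from $\bar\Omega$ has left $\bar\Omega$ by time $T$, so $u(T,\cdot)$ and $u_t(T,\cdot)$ restricted to $\bar\Omega$ are $C^\infty$, and in fact $\mathbf{f}\mapsto[u(T,\cdot)-\phi,\,u_t(T,\cdot)]$ is an operator with smooth Schwartz kernel on $\bar\Omega$. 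Composing a smoothing operator with the unitary backward propagator gives the compactness of $\mathbf{K}$. Without this microlocal input, the argument has no source of compactness at all.

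Two smaller points. First, in your second paragraph you describe the Cauchy data at $t=T$ for $w$ as $[u(T,\cdot)-\phi,\,0]$; this contradicts your (correct) first paragraph and should be $[u(T,\cdot)-\phi,\,u_t(T,\cdot)]$, since $\partial_t v|_{t=T}=0$ in \r{l2}. Second, your route to the strict inequality $\|\mathbf{K}\|<1$ via a contradiction sequence is fine, but the paper's path is slightly more direct: one first shows $\|\mathbf{K}\mathbf{f}\|_{\mathcal{H}}<\|\mathbf{f}\|_{\mathcal{H}}$ for every nonzero $\mathbf{f}$ by tracing the chain of energy inequalities --- equality forces $E_\Omega(u,T)=E_{\R^n}(u,T)$, hence $u(T,\cdot)=0$ outside $\Omega$, and then finite speed plus Tataru's unique continuation yields $\mathbf{f}=0$. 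Compactness of $\mathbf{K}$ then upgrades the pointwise strict inequality to $\|\mathbf{K}\|=\sqrt{\lambda_1}<1$ via the spectral theorem for $\mathbf{K}^*\mathbf{K}$.
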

\begin{proof}
Let $\mathbf{f}\in C_0^\infty(\Omega)\times C_0^\infty(\Omega)$ first. Let $w$ solve
\begin{equation}   \label{2.3}
\left\{
\begin{array}{rcll}
(\partial_t^2 -c^2\Delta)w &=&0 &  \mbox{in $(0,T)\times \Omega$},\\
w|_{[0,T]\times\partial\Omega}&= &0,\\
w|_{t=T} &=& u|_{t=T}-\phi,\\ \quad w_t|_{t=T}& =&u_t|_{t=T},\\
\end{array}
\right.               
\end{equation}
where $u$ solves \r{1'} with a given $\mathbf{f}\in \mathcal{H}$. 
Let $v$ be the solution of \r{l2} with $h=\Lambda f$. Then $v+w$ solves the same initial boundary value problem in $[0,T]\times\Omega$ that $u$ does (with initial conditions at $t=T$), therefore $u=v+w$. Restrict this to $t=0$ to get
\[
\mathbf{f}= \mathbf{A}\Lambda  \mathbf{f} + \mathbf{w}(0,\cdot).
\]
Set 
\[
\mathbf{K} \mathbf{f} = \mathbf{w}(0,\cdot) = [w(0,\cdot), w_t(0,\cdot)].
\]
We will show now that $\mathbf{K}$ extends to a compact operator. Since $T>T(\Omega)$, all singularities starting from $\bar \Omega$ leave $\bar \Omega$ at $t=T$. Therefore, $u(T,\cdot)$ and $u_t(T,\cdot)$, restricted to $\bar\Omega$, are $C^\infty$. Moreover, considered as linear operators of $\mathbf{f}$, they are operators with smooth Schwartz kernels. 
Then so is $\phi$, see \r{l3}, by elliptic regularity. 
Therefore, the map 
\[
\mathcal{H}(\Omega)\ni \mathbf{f}\quad \longmapsto\quad \left[u(T,\cdot)-\phi ,u_t(T,\cdot)\right]\in \mathcal{H}(\Omega),
\]
defined a priori on $C_0^\infty(\Omega)\times C_0^\infty(\Omega)$ functions, extends to a compact one  because it is an operator with smooth kernel on $\bar\Omega$.   Since the solution operator of \r{2.3} from $t=T$ to $t=0$ is unitary in $ \mathcal{H}(\Omega)$, we get that the map $\mathcal{H}(\Omega)\ni \mathbf{f}\mapsto [w(0,\cdot),w_t(0,\cdot)] \in H_D(\Omega))$ is compact, too, as a composition of a compact and a bounded one. 

We know remove the smoothness restriction on $\mathbf{f}$, and let it be any element in $\mathcal{H}$. 
In what follows, $(\cdot,\cdot)_{H_{D}(\Omega)}$ is the inner product in $H_{D}(\Omega)$, see \r{2.0H}, applied to functions that belong to $H^1(\Omega)$ but maybe not to $H_{D}(\Omega)$ (because they may not vanish on $\bo$). Set $u^T := u(T,\cdot)$. By \r{Pf} and the fact that $u^T=\phi$ on $\bo$, we get
\[
(u^T-\phi,\phi)_{H_{D}(\Omega)}=0.
\]
Then
\[
\|u^T-\phi\|^2_{H_{D}(\Omega)} = \|u^T\|^2_{H_{D}(\Omega)} - \|\phi\|^2_{H_{D}(\Omega)}\le \|u^T\|^2_{H_{D}(\Omega)}.
\]
Therefore, the energy  of the initial conditions in \r{2.3} satisfies the inequality
\be{2.4}
E_\Omega(w,T) = \|u^T-\phi\|^2_{H_{D}(\Omega)}  +\|u^T_t\|^2_{L^2(\Omega)}  \le E_\Omega(u,T).
\ee
Since the Dirichlet boundary condition is energy preserving, we get 
\[
E_{\Omega}(w,0) =    E_{\Omega}(w,T)\le  E_{\Omega}(u,T)\le E_{\R^n}(u,T)= E_{\Omega}(u,0) = \|\mathbf{f}\|^2_{\mathcal{H}}. 
\]
Therefore, 
\be{2.5}
\|\mathbf{K}\mathbf{f}\|^2_{H_{D}(\Omega)} = E_{\Omega}(w,0)\le \|\mathbf{f}\|^2_{H_{D}(\Omega)}.
\ee

We show next that actually the inequality above is strict, i.e., 
\be{2.6}
\|\mathbf{Kf}\|_{\mathcal{H}(\Omega)} < \|\mathbf{f}\|_{\mathcal{H}(\Omega)}, \quad \mathbf{f}\not=0.
\ee
Assume the opposite. Then for some $\mathbf{f}\not=0$, all inequalities leading to \r{2.5} are equalities. In particular, $ E_{\Omega}(w,T)=  E_{\R^n}(u,T)$. Then 
\[
u(T,x) = 0, \quad \mbox{for $x\not\in\Omega$}.
\]
By the finite domain of dependence then
\be{2.7}
u(t,x) = 0 \quad \mbox{when $\dist(x,\Omega)>|T-t|$}.
\ee
One the other hand, we also have 
\be{2.8}
u(t,x) = 0 \quad \mbox{when $\dist(x,\Omega)>|t|$}.
\ee
Therefore,
\be{2.8a}
u(t,x) = 0 \quad \mbox{when $\dist(x,\bo)>T/2, \;  -T/2\le t\le 3T/2$}.
\ee
In \cite{SU-thermo}, we used the fact that $u$ extends to an even function of $t$ that is still a solution of the wave equation because $f_2=0$ there. Then one gets that \r{2.8a} actually holds for $|t|<3T/2$. Then one concludes by Tataru's unique continuation theorem \cite{Tataru04} that $u=0$ on $[0,T]\times \Omega$, therefore, $f=0$.  We also noted there that the time interval $[0,T]$ is actually larger (twice as large) than what we need for the Neumann series to converge, see   \cite{SU-thermo_brain, S-U-InsideOut10}, where $T>T(\Omega)/2$ only. 

In the case under consideration, $f_2$ does not necessarily vanish. We modify the arguments as follows. From John's theorem (equivalent to Tataru's \cite[Theorem~2]{SU-thermo} in the Euclidean setting), we get that $u=0$ on $[0,T]\times \R^n\setminus \Omega$.  Then \cite[Theorem~2]{SU-thermo} implies that $u=0$ for $t=T/2$ and all $x$. By energy preservation, $\mathbf{f}=0$. 


Now, one has 
\be{2.9}
\|\mathbf{Kf}\|_{\mathcal{H}(\Omega)} \le \sqrt{\lambda_1} \|\mathbf{f}\|_{\mathcal{H}(\Omega)}, \quad \mathbf{f}\not=0,
\ee
where $\lambda_1$ is the largest eigenvalue of $\mathbf{K^*K}$. Then $\lambda_1<1$ by \r{2.6}.
\end{proof}

Denote by 
\[
\mathbf{B} := (\mathbf{\text Id}-\mathbf{K})^{-1}\mathbf{A}
\]
the left inverse of $\Lambda$ constructed in Theorem~\ref{thm2.1}. 

\begin{proposition}\label{pr_B} 
\[
\mathbf{B} : H^1_{(0)}([0,T]\times \bo) \to \mathcal{H}  \cong H_0^1(\Omega)\times L^2(\Omega)
\]
is a continuous map. 
\end{proposition}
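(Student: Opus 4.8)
The plan is to read the continuity of $\mathbf{B}$ directly off the two ingredients already assembled in the excerpt, since $\mathbf{B}=(\mathbf{Id}-\mathbf{K})^{-1}\mathbf{A}$ is nothing but a composition of continuous linear maps. The first factor is handled by the cited estimate of \cite{LasieckaLT}, which gives precisely that
\[
\mathbf{A}: H^1_{(0)}([0,T]\times \bo) \to \mathcal{H}(\Omega)
\]
is continuous. So the only thing left to verify is that the second factor $(\mathbf{Id}-\mathbf{K})^{-1}$ is a bounded operator on $\mathcal{H}(\Omega)$.

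For that I would invoke Theorem~\ref{thm2.1} verbatim. There we established $\|\mathbf{K}\|_{\mathcal{H}(\Omega)}<1$, so the Neumann series $\sum_{m=0}^\infty \mathbf{K}^m$ converges in the operator norm on $\mathcal{H}(\Omega)$, represents $(\mathbf{Id}-\mathbf{K})^{-1}$, and obeys
\[
\big\|(\mathbf{Id}-\mathbf{K})^{-1}\big\|_{\mathcal{H}(\Omega)\to\mathcal{H}(\Omega)} \le \frac{1}{1-\|\mathbf{K}\|_{\mathcal{H}(\Omega)}}.
\]
Composing this with the bound on $\mathbf{A}$, for $h\in H^1_{(0)}([0,T]\times\bo)$ one obtains
\[
\|\mathbf{B}h\|_{\mathcal{H}(\Omega)} \le \big\|(\mathbf{Id}-\mathbf{K})^{-1}\big\|_{\mathcal{H}(\Omega)\to\mathcal{H}(\Omega)}\, \|\mathbf{A}h\|_{\mathcal{H}(\Omega)} \le C\,\|h\|_{H^1_{(0)}([0,T]\times\bo)},
\]
which is the asserted continuity.

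The last point to record is that the target norm may be taken to be that of $H_0^1(\Omega)\times L^2(\Omega)$: this is exactly the topological equivalence of $H_D(\Omega)$ and $H_0^1(\Omega)$ noted in the Preliminaries, valid because $\Omega$ is bounded with smooth boundary, so switching between the $\mathcal{H}(\Omega)$ norm and the $H_0^1(\Omega)\times L^2(\Omega)$ norm does not affect continuity. I do not expect any genuine obstacle in this proposition: all of the analytic content lives in Theorem~\ref{thm2.1} (the compactness of $\mathbf{K}$ and the strict contraction estimate) and in the boundary-to-interior energy estimate of \cite{LasieckaLT}, and the statement here is just the assembly of those two facts into the continuity of their composite.
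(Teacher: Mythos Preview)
Your proof is correct and follows essentially the same approach as the paper: the paper's proof simply notes that $\mathbf{A}$ has the stated mapping property by \cite{LasieckaLT} and that $(\mathbf{Id}-\mathbf{K})^{-1}$ is bounded on $\mathcal{H}$ by Theorem~\ref{thm2.1}. Your version just spells out the Neumann-series bound and the $H_D\cong H_0^1$ identification explicitly, which is fine.
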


\begin{proof}
Note first, that $\mathbf{A}$ has the mapping properties above, by the results in \cite{LasieckaLT}. Next, $(\mathbf{\text Id}-\mathbf{K})^{-1}$ is a bounded map in $\mathcal{H}$ by Theorem~\ref{thm2.1}. 
\end{proof}

Let $B_{1,2}$ be the components of $\mathbf{B}$ (that sends scalar functions to vector functions), i.e., $\mathbf{B}h= (B_1h, B_2h)$. Let $\Lambda_{1,2}$ be the components of $\Lambda$ (that sends vector functions to scalar functions, i.e., $\Lambda\mathbf{f} = \Lambda_1f_1+\Lambda_2f_2$. We can think of $\mathbf{B}$ as a $2$x$1$   matrix, and of $\Lambda$ as an $1$x$2$ matrix. Then $\mathbf{B}\Lambda [f_1,f_2]=[f_1,f_2]$, therefore,
\be{l5}
B_1\Lambda_1=\Id, \quad B_2\Lambda_2=\Id, \quad B_1\Lambda_2=0. \quad B_2\Lambda_1=0.
\ee
Set $\partial_t^{-1}g = \int_0^t g(s)\, \d s$ and let $\Delta_D$ be the Dirichlet realization of $\Delta$ in $\Omega$. 
We  have the following.
\begin{proposition}  \label{pr_12}
\be{16}
\begin{array}{rcll}
\Lambda_2c^2\Delta  &=& \partial_t \Lambda_1& \text{\rm on $ H_D(\Omega)\cap H^2(\Omega)  $},\\ 
 \partial_t\Lambda_2 &=& \Lambda_1, \quad \Lambda_2 = \partial_t^{-1} \Lambda_1 & \text{\rm on $ H_D(\Omega)  $},\\
\partial_t^{-1}\Lambda_2 &=& \Lambda_1 (c^2\Delta_D )^{-1} ,&
 \text{\rm on $ L^2(\Omega)  $},
\end{array}
\ee
\end{proposition}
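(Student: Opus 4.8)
The plan is to read off all three lines of \r{16} from the functional calculus of the wave propagator and then to reduce the third line to the first. Write the solution of \r{1'} as $u(t)=U_1(t)f_1+U_2(t)f_2$, where, with $P=c^2\Delta$ and $A=\sqrt{-P}$ understood via the spectral theorem for the self-adjoint operator appearing in \r{Pf}, one has $U_1(t)=\cos(tA)$ and $U_2(t)=A^{-1}\sin(tA)$. By definition $\Lambda_j=U_j(\cdot)|_{\boo}$, so each identity will follow by first proving the corresponding operator identity between $U_1$ and $U_2$ and then restricting to the boundary, using that the trace commutes with $\partial_t$ and with $\partial_t^{-1}$.

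First I would establish the first line, $\Lambda_2 c^2\Delta=\partial_t\Lambda_1$ on $H_D(\Omega)\cap H^2(\Omega)$. Take $f$ in that space and let $u=U_1(t)f$ solve \r{1'} with data $[f,0]$. Differentiating the equation and the Cauchy data in $t$, the function $\partial_t u$ again solves the wave equation, now with data $[\,u_t|_{t=0},\,u_{tt}|_{t=0}\,]=[\,0,\,c^2\Delta f\,]$; hence $\partial_t U_1(t)f=U_2(t)(c^2\Delta f)$, and taking the trace on $\boo$ gives the claim. Equivalently, at the level of the functional calculus, $\partial_t\cos(tA)=-A\sin(tA)=A^{-1}\sin(tA)\,P=U_2(t)(c^2\Delta)$, using $P=c^2\Delta=-A^2$; this also explains why $f\in H^2$ is the right regularity, since it guarantees $c^2\Delta f\in L^2(\Omega)$, an admissible second Cauchy datum in $\mathcal{H}$.

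The second line is the analogous but simpler computation in which no Laplacian is applied, so only $f\in H_D(\Omega)$ is needed: differentiating $U_2(t)f$ (data $[0,f]$) gives a solution with data $[f,0]$, i.e.\ $\partial_t U_2(t)=U_1(t)$, and since $U_2(0)=0$, integration from $0$ yields $U_2(t)=\int_0^t U_1(s)\,\d s=\partial_t^{-1}U_1(t)$; restricting to $\boo$ gives $\partial_t\Lambda_2=\Lambda_1$ and $\Lambda_2=\partial_t^{-1}\Lambda_1$. For the third line I would combine the first line with the boundary condition built into $(c^2\Delta_D)^{-1}$. Given $g\in L^2(\Omega)$, set $w=(c^2\Delta_D)^{-1}g\in H_D(\Omega)\cap H^2(\Omega)$, so that $c^2\Delta w=g$ and $w|_{\bo}=0$. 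The first line applied to $w$ gives $\partial_t\Lambda_1 w=\Lambda_2 g$; applying $\partial_t^{-1}$ and noting that the integration constant is $\Lambda_1 w|_{t=0}=w|_{\bo}=0$ because $w$ satisfies the homogeneous Dirichlet condition, we obtain $\Lambda_1(c^2\Delta_D)^{-1}g=\partial_t^{-1}\Lambda_2 g$, which is the third line.

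The main obstacle is making the first line rigorous on all of $H_D(\Omega)\cap H^2(\Omega)$ rather than on, say, $C_0^\infty(\Omega)$, where the Cauchy-problem manipulation is unambiguous. The delicate points are that $\Lambda_1,\Lambda_2$ are defined through the free evolution on $\R^n$ while $c^2\Delta$ and $(c^2\Delta_D)^{-1}$ refer to $\Omega$; that one must justify commuting $\partial_t$ and $\partial_t^{-1}$ with the trace in the spaces where $\Lambda$ is bounded (the $H^1_{(0)}$-mapping property recalled before Proposition~\ref{pr_B}); and, above all, that one must control the boundary contribution produced when $\Delta$ is applied to a function that vanishes on $\bo$ but whose normal derivative need not, since the zero extension of such a function carries a single-layer term on $\bo$. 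I would handle the first two points by verifying the identities on a dense class and extending by continuity of each operator in \r{16} between the indicated spaces, and I expect the genuine work to lie in showing that, in the correct topology, feeding $c^2\Delta w$ into $\Lambda_2$ reproduces exactly $\Lambda_2 g$; the Dirichlet condition $w|_{\bo}=0$ is what removes the $t=0$ integration constant and is the mechanism by which the third line is meant to close.
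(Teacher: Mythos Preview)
Your argument is essentially the paper's own proof: differentiate the solution in $t$, read off the new Cauchy data $[0,c^2\Delta f]$ (respectively $[f,0]$), restrict to $\boo$, and for the last line apply the first to $w=(c^2\Delta_D)^{-1}g$ and integrate using $w|_{\bo}=0$; the paper does exactly this, without the functional-calculus notation $U_1=\cos(tA)$, $U_2=A^{-1}\sin(tA)$, which adds nothing. Your final paragraph is overcautious: the worry that ``feeding $c^2\Delta w$ into $\Lambda_2$'' might not give $\Lambda_2 g$ is empty, since $c^2\Delta w=g$ in $\Omega$ by the very definition of $w$, and the single-layer subtlety you flag for the first line is not addressed in the paper's short proof either---it simply asserts $[0,c^2\Delta f_1]\in\mathcal{H}$ and moves on, the identities being used later only on functions compactly supported in $\Omega$, where the issue does not arise.
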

\begin{proof}
Let $u$ solve \r{1'} with $\mathbf{f}=[f_1,0]\in D(\mathbf{P})$, see \r{s1}. 
Then $\partial_t u$ solves the wave equation, too, with Cauchy data $[0,c^2\Delta f_1]\in \mathcal{H}$. This proves the first relation in \r{16}. Similarly, let $u$ solve \r{1'} with $\mathbf{f}=[0,f_2]\in D(\mathbf{P})$. Then $\partial_t u$ solves the wave equation, too, with Cauchy data $[f_2,0]$. This proves the second relation.

For $f\in H_D(\Omega)$, we have $\partial_t\Lambda_2 f= \Lambda_1 f$ by what we just proved. Moreover, $\Lambda_2 f=0$ for $t=0$. This  proves the third relation in \r{16}.

Finally, let $f\in L^2(\Omega)$. Then $\partial_t \Lambda_1 (c^2\Delta_D )^{-1}f = \Lambda_2f$ by the first identity in \r{16}. Moreover, $\Lambda_1 (c^2\Delta_D )^{-1}f =0$ for $t=0$. This proves the fourth identity.
\end{proof}

Relations  \r{16} can be used to show that $\Lambda_2$ has a left inverse based on the result in \cite{SU-thermo} only. Indeed, set $B_2' := B_1\partial_t$. Then $B_2'\Lambda_2 = B_1\partial_t\Lambda_2 =B_1\Lambda_1=\Id$ on $H_D(\Omega)$. Analyzing the mapping properties of $B_2'$ and $\Lambda_2$ as in \cite{SU-thermo}, one gets that the composition  $B_1\Lambda_1$ is well defined and is a bounded operator on $L^2(\Omega)$; that therefore has to be identity. 


\section{Recovery of the speed $c$ when $f$ is known; The linearization $\delta\Lambda_1/\delta c^2$ w.r.t. the speed}
The non-linear problem of recovery of $c$ when $f$ is known, and its linearization were studied in detail by the authors in \cite{SU-Carleman}. We showed there that if $\delta c^2$ is a priori supported in a compact subset $\mathcal{K}$, then the linearization $\delta\Lambda_1/\delta c^2$ is Fredholm. We also gave conditions for uniqueness for both the linear and the non-linear problem. The geometric requirement is that there exists a foliation of $\bar\Omega$ by strictly convex surfaces. If $n=2$, non-trapping implies that condition. In all dimensions, if $c$ is close enough to a constant, that condition holds. In particular, if 
\be{I5'}
x\cdot\partial c<c\quad \text{in $\bar\Omega$},
\ee
then the parts   of the spheres $|x|=R$, $R>0$,  intersecting $\bar\Omega$ form a foliation of surfaces convex w.r.t.\ the metric $c^{-2}\d x^2$, see \cite{S-U-InsideOut10,SU-Carleman}.  There is an another condition as well: we require that $\Delta f(x)\not=0$, $\forall x\in K$. This condition fits well into our analysis; it is an if and only if condition for the operator $Q_N$ below to be elliptic.

\section{Analysis of the linearized operator $\delta\Lambda_1$} \label{sec_4} 
We derive a representation of the difference $\tilde \Lambda_1 \tilde f - \Lambda_1 f$ first. If one is interested in the linearization only, the computations are not really simpler --- one just has to drop the tilde in most of the formulas.  

Let $(c,f)$, $(\tilde c,\tilde f)$ be two pairs, and let $u$, $\tilde u$  be the corresponding solutions of \r{1}. Then
\begin{equation}   \label{w1}
\left\{
\begin{array}{rcll}
(\partial_t^2 -c^2\Delta)(\tilde u-u) &=&(\tilde c^2-c^2)\Delta \tilde u &  \mbox{in $(0,T)\times \R^n$},\\
(\tilde u-u)|_{t=0} &=&\tilde f- f,\\ \quad \partial_t (\tilde u-u)|_{t=0}& =&0.
\end{array}
\right.               
\end{equation}
Then 
\[
\tilde \Lambda_1 \tilde f - \Lambda_1 f = (\tilde u-u)\big|_{[0,T]\times \bo}.
\]
We have 
\be{w1a}
\tilde \Lambda_1 \tilde f - \Lambda_1 f = \Lambda_1(\tilde f-f)+ w\big|_{[0,T]\times \bo},
\ee
where $w$ solves
\begin{equation}   \label{w2}
\left\{
\begin{array}{rcll}
(\partial_t^2 -c^2\Delta)w &=&(\tilde c^2-c^2)\Delta\tilde  u &  \mbox{in $(0,T)\times \R^n$},\\
w|_{t=0} &=& 0,\\ \quad \partial_t w|_{t=0}& =&0.
\end{array}
\right.               
\end{equation}
Set  
\[
P:=c^2\Delta, \qquad 
h = c^{-2}( \tilde c^2 - c^2 ).
\]
 and let $P_D$ be the Dirichlet realization of $P$ in $\Omega$. Recall the notation  $\partial_t^{-1}g = \int_0^t g(s)\, \d s$. Denote by $R\mathbf{u}$ with  $\mathbf{u}=[u_1,u_2]$  the restriction of $u_1$ to $[0,T]\times \bo$. 
We have  
\[
w\big|_{[0,T]\times \bo} = R\int_0^t U(s)[0,hP  \tilde u(t-s,\cdot)]\, \d s.
\]
Take the $t$ derivative to get
\be{w3}
w_t\big|_{[0,T]\times \bo} =   \Lambda_2 hP  \tilde f+ W, \quad W:= R\int_0^t U(s)[0,h   P \tilde u_t(t-s,\cdot)]\, \d s.
\ee
Differentiate $W$ to get
\be{w3a}
\frac{\partial}{\partial t} W = R\int_0^t U(s)[0,h \partial_t^2 P  \tilde u (t-s,\cdot)]\, \d s ,
\ee
where we used the fact that $\partial_t  \tilde u=0$ for $t=0$, therefore 
 $\partial_t\Delta \tilde u=0$ for $t=0$. Differentiate one more time to get
\be{w3b}
\frac{\partial^2}{\partial t^2} W 
= \Lambda_2 hP^2 \tilde f   + 
R\int_0^t U(s)[0,h \partial_t^3 P\tilde u(t-s,\cdot)]\, \d s ,
\ee
Relations \r{w3} and \r{w3a} show that $W=O(t^2)$ at $t=0$. Therefore, 
\be{w3f}
W = \partial_t^{-2}\Lambda_2 h P^2\tilde f  + \partial_t^{-2} R\int_0^t U(s)[0,h \partial_t^3 P \tilde u(t-s,\cdot)]\, \d s := I_1+I_2 .
\ee
By Proposition~\ref{pr_12}, for the first term on the right we get
\[
I_1 = \Lambda_2P_D^{-1}\left(h P^2 \tilde f\right).
\]
The second term on the r.h.s.\ of \r{w3f} can be written in the form
\[
I_2 := \partial_t^{-2} R\int_0^t U(t-s)[0,h   P^2 \tilde u_t(s,\cdot)]\, \d s .
\]
We claim that $I_2=I_2'$, where
\[ 
I_2' = R\int_0^t U(t-s)[0,P_D^{-1}h  P^2 \tilde u_t(s,\cdot)]\, \d s .
\]
Indeed, $I_1'=0$ for $t=0$, and  direct differentiation shows that 
\[ 
\partial_t I_2' = R\int_0^t U(t-s)[P_D^{-1}h   P^2 \tilde u_t(s,\cdot),0]\, \d s .
\]
because $RP_D^{-1}=0$. Therefore, $\partial_t I_1'=0$ for $t=0$ as well. Differentiate one more time to get $I_2=I_2'$.  Therefore,
\[
W = \Lambda_2P_D^{-1} h  P^2 \tilde f + 
R\int_0^t U(s)[0,P_D^{-1}h   P^2 \tilde u_t(t-s,\cdot)]\, \d s .
\]
Compare with \r{w3} and repeat these arguments  to get for any $N=1,2,\dots$
\[
W = \sum_{k=1}^{N-1}\Lambda_2 P_D^{-k}h P^{k+1} \tilde f +  
R\int_0^t U(s)[0,P_D^{-N+1}h   P^{N}\tilde u_t(t-s,\cdot)]\, \d s .
\]
We want to emphasize that in all those formulas, $h$ is considered as an operator of multiplication, i.e., $\Lambda_2 P_D^{-k}h P^{k+1} \tilde f= \Lambda_2 (P_D^{-k}(h P^{k+1} \tilde f))$, etc.  
Combine this with \r{w1a}, \r{w3}, to get the following.
\begin{multline}  \label{w4_0}
\partial_t\left( \tilde \Lambda_1\tilde f - \Lambda_1 f\right)  \\= \Lambda_2 P (\tilde f-f) + 
  \sum_{k=0}^{N-1}\Lambda_2 P_D^{-k} h P^{k+1} \tilde f  +  
R\int_0^t U(s)[0,P_D^{-N+1} h    P^{N}\tilde u_t(t-s,\cdot)]\, \d s.
\end{multline}
Since $\tilde \Lambda_1\tilde f - \Lambda_1 f=0$ for $t=0$, integrating w.r.t. \ $t$, and applying Proposition~\ref{pr_12}, we get
\begin{multline}  \label{w4_1}
\tilde \Lambda_1\tilde f - \Lambda_1 f \\= \Lambda_1 (\tilde f-f) + 
  \sum_{k=1}^N\Lambda_1 P_D^{-k} h P^{k} \tilde f  +  \partial_t^{-1}
R\int_0^t U(s)[0,P_D^{-N}h    P^{N+1}\tilde u_t(t-s,\cdot)]\, \d s.
\end{multline}
Finally, using the arguments above, we write the last term on the right as
\[
R\int_0^t U(s)[P_D^{-N-1} h  P^{N+1}\tilde u_t(t-s,\cdot),0]\, \d s.
\]

We therefore proved the following. 
\begin{theorem}\label{thm_nonlin} Let $(c,f)$ and $(\tilde c, \tilde f)$ be two  pairs in $C^{2N}(\bar\Omega)\times H^{2N+2}(\Omega)\cap H_0^1(\Omega)$ with $c>0$, $\tilde c>0$ in $\bar\Omega$. 
 Then 
\begin{align}  \nonumber
 \tilde \Lambda_1\tilde f - \Lambda_1 f   = &\Lambda_1 (\tilde f-f) + 
\Lambda_1  \sum_{k=1}^N\big(c^{2}\Delta_D\big)^{-k}\left(c^{-2}(\tilde c^2 - c^2) \big(c^2\Delta\big)^{k} \tilde f\right) \\ &  + 
R\int_0^t U(s)\left[\big(c^{2}\Delta_D\big)^{-N-1}\left( c^{-2}(\tilde c^2 - c^2)  \big(c^{2}\Delta\big)^{N+1}\tilde u_t(t-s,\cdot)\right),0\right]\, \d s.\label{w4}
\end{align}
\end{theorem}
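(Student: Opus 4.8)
The formula \r{w4} is an exact (not merely infinitesimal) identity, so the plan is to assemble it from the Duhamel representation of $w$ in \r{w2} together with the operator calculus of Proposition~\ref{pr_12}, and then to verify that the stated regularity $C^{2N}(\bar\Omega)\times\big(H^{2N+2}(\Omega)\cap H_0^1(\Omega)\big)$ is exactly what makes every operator on the right of \r{w4} act between the correct spaces. First I would record $\tilde\Lambda_1\tilde f-\Lambda_1 f=\Lambda_1(\tilde f-f)+w|_{\boo}$ as in \r{w1a}, and represent $w$ by Duhamel against the group $U(s)$ generated by $\mathbf P$ (see \r{s1}), giving $w|_{\boo}=R\int_0^t U(s)[0,hP\tilde u(t-s,\cdot)]\,\d s$ with $h=c^{-2}(\tilde c^2-c^2)$ and $P=c^2\Delta$. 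Differentiating in $t$ and using $\tilde u_t|_{t=0}=0$ yields \r{w3} and isolates the first explicit term $\Lambda_2 hP\tilde f$.

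The engine is a single iteration step, which I would then repeat. Starting from a remainder $R\int_0^t U(s)[0,P_D^{-j}hP^{j+1}\tilde u_t(t-s,\cdot)]\,\d s$, differentiate twice in $t$; the endpoint terms at $s=t$ vanish because $\tilde u_t|_{t=0}=0$, leaving one new explicit term $\Lambda_2 P_D^{-j}hP^{j+2}\tilde f$ and a remainder carrying $\partial_t^3\tilde u$. Inverting $\partial_t^2$ against the vanishing Cauchy data and invoking the relations $\partial_t^{-1}\Lambda_2=\Lambda_1 P_D^{-1}$ and $\partial_t^{-1}\Lambda_1=\Lambda_2$ from \r{16} turns $\partial_t^{-2}\Lambda_2$ into $\Lambda_2 P_D^{-1}$, raising the power of $P_D^{-1}$ by one. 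The matching move inside the integral is the identity $I_2=I_2'$, which replaces $\partial_t^{-2}$ acting on a Duhamel integral by inserting $P_D^{-1}$ into the source; I would justify it by checking that $I_2'$ and $\partial_t I_2'$ vanish at $t=0$ and that $\partial_t^2 I_2'$ reproduces the integrand, using $RP_D^{-1}=0$ (the first slot lands in $H_D(\Omega)$) and $PP_D^{-1}=\Id$. Iterating $N$ times produces \r{w4_0}; integrating once in $t$ and applying the same two moves once more produces \r{w4_1} and the boundary rewrite of the remainder, which is precisely \r{w4} after expressing $h$, $P$, $P_D$ through $c,\tilde c$.

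The part I expect to be delicate is not the algebra but its legitimacy under \emph{finite} regularity and the consistent bookkeeping of operators. Each factor $P=c^2\Delta$ costs two derivatives and each $P_D^{-1}$ returns two, so $(c^2\Delta)^{k}\tilde f$ and $(c^2\Delta)^{N+1}\tilde u_t$ demand $\tilde f\in H^{2N+2}$ together with the propagated Sobolev regularity of $\tilde u$, while $c,\tilde c\in C^{2N}$ is what keeps multiplication by $h$ and the repeated action of $P$ inside the right Sobolev scale; the condition $\tilde f\in H_0^1$ is needed both for $P_D^{-1}$ and for the vanishing traces that drive the $I_2=I_2'$ step. I would therefore track that the $k$-th iterate leaves $\tilde f$ in $H^{2N+2-2k}$ and that $P_D^{-k}$ restores it, so that every intermediate quantity is a genuine $\mathcal H$- or $H_D(\Omega)$-valued function and differentiation under the integral is valid. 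The real crux is the mixed use of the propagator $U(s)$, through which $\Lambda_{1,2}$ are defined on $\R^n$, and the Dirichlet inverse $P_D^{-1}$: the identities $PP_D^{-1}=\Id$ and $RP_D^{-1}=0$ must be read in the sense supplied by Proposition~\ref{pr_12}, which is exactly what reconciles the two. Verifying that this interpretation is self-consistent at each insertion of $P_D^{-1}$ is where the care lies; once it is in place, \r{w4} follows by the bookkeeping above.
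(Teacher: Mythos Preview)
Your proposal is correct and follows essentially the same route as the paper: Duhamel for $w$, the $t$-differentiation that peels off $\Lambda_2 hP\tilde f$, the $I_2=I_2'$ trick via $RP_D^{-1}=0$ to trade $\partial_t^{-2}$ for an extra $P_D^{-1}$, iteration to \r{w4_0}, and one final $\partial_t^{-1}$ with Proposition~\ref{pr_12} to reach \r{w4}. Your added regularity bookkeeping (tracking that $(c^2\Delta)^k\tilde f\in H^{2N+2-2k}$ and that $P_D^{-k}$ restores it) is more explicit than what the paper spells out, but the underlying argument is the same.
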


We define the linearization  $\delta \Lambda_1\{\delta f,\delta c^2\}$ at $(c,f)$ as the derivative at $\eps=0$ of $\Lambda_1$ with speed  $c_\eps^2=c+\eps \delta c$ and source $f_\eps=f+\eps\delta f$. 

\begin{corollary} For any $N=1,2,\dots$, the linearized operator $\delta \Lambda_1\{\delta f,\delta c^2\}$ at $(c,f)$ has the form
\be{w5}
 \delta \Lambda_1\{\delta f,\delta c^2\} = \Lambda_1\!\left( \delta f+Q_N  c^{-2}\delta c^2\right) +R_N c^{-2}\delta c^2,
\ee
where
\be{w7}
Q_N h = \sum_{k=1}^N  \big(c^{2}\Delta_D\big)^{-k}\left( h \big(c^2\Delta\big)^{k}  f\right) ,
\ee
and 
\be{w7a}
R_N h =  R\int_0^t U(s)\left[\big(c^{2}\Delta_D\big)^{-N-1}\left( h  \big(c^{2}\Delta\big)^{N+1} u_t(t-s,\cdot)\right),0\right]\, \d s .
\ee
\end{corollary}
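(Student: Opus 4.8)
The plan is to read the corollary off Theorem~\ref{thm_nonlin} by inserting the one-parameter family and differentiating at $\eps=0$. I would take $\tilde c^2=c^2+\eps\,\delta c^2$ and $\tilde f=f+\eps\,\delta f$, so that $\tilde f-f=\eps\,\delta f$ and $c^{-2}(\tilde c^2-c^2)=\eps\,c^{-2}\delta c^2$. For $\eps$ small and $\delta c^2,\delta f$ in the regularity classes of Theorem~\ref{thm_nonlin}, the pair $(\tilde c,\tilde f)$ remains admissible (in particular $\tilde c>0$ on $\bar\Omega$), so the representation \r{w4} applies verbatim; crucially, every operator there --- $\Lambda_1$, $c^2\Delta$, $(c^2\Delta_D)^{-1}$, the propagator $U(s)$, and the restriction $R$ --- is frozen at the base speed $c$, and the entire $\eps$-dependence sits in the explicit factors $\tilde f-f$ and $\tilde c^2-c^2$ and in the perturbed solution $\tilde u$.

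I would then treat the three terms of \r{w4} separately. The first, $\Lambda_1(\tilde f-f)=\eps\,\Lambda_1\delta f$, is exactly linear in $\eps$ and contributes $\Lambda_1\delta f$. In the finite sum, pulling out $\tilde c^2-c^2=\eps\,\delta c^2$ gives $\eps\,\Lambda_1\sum_{k=1}^N(c^2\Delta_D)^{-k}\big(c^{-2}\delta c^2\,(c^2\Delta)^k\tilde f\big)$; since $\tilde f=f+\eps\,\delta f$, dividing by $\eps$ and letting $\eps\to0$ discards the $\eps\,\delta f$ piece and leaves $\Lambda_1 Q_N(c^{-2}\delta c^2)$ by the definition \r{w7}. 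The remainder likewise carries the explicit prefactor $\eps\,\delta c^2$, so after dividing by $\eps$ it equals $R\int_0^t U(s)\big[(c^2\Delta_D)^{-N-1}\big(c^{-2}\delta c^2\,(c^2\Delta)^{N+1}\tilde u_t(t-s,\cdot)\big),0\big]\,\d s$, which converges to $R_N(c^{-2}\delta c^2)$ as in \r{w7a} once $\tilde u_t\to u_t$. Combining the first two contributions as $\Lambda_1(\delta f+Q_N c^{-2}\delta c^2)$ and adding the third yields \r{w5}.

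The one genuinely analytic point --- and the step I expect to be the main obstacle --- is justifying $\tilde u\to u$ with $\tilde u_t\to u_t$ as $\eps\to0$ in a topology strong enough that $(c^2\Delta)^{N+1}$ may be applied and the resulting family depends continuously on $\eps$. Because the remainder already carries the prefactor $\eps$, I only need continuity of $\eps\mapsto\tilde u$ at $\eps=0$, not differentiability: the difference quotient of $\eps\,g(\eps)$ at the origin is $g(\eps)\to g(0)$. This continuous dependence on the coefficient and the Cauchy data is standard for smooth-coefficient hyperbolic equations through Sobolev energy estimates, and the hypotheses $c\in C^{2N}(\bar\Omega)$ and $f\in H^{2N+2}(\Omega)\cap H_0^1(\Omega)$ are exactly what make $(c^2\Delta)^{N+1}\tilde u_t$ controllable uniformly in $\eps$. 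To make this rigorous I would differentiate \r{w1} in $\eps$: the derivative $v=\partial_\eps\tilde u|_{\eps=0}$ solves $(\partial_t^2-c^2\Delta)v=\delta c^2\,\Delta u$ with Cauchy data $[\delta f,0]$, a well-posed linear wave problem whose energy estimate delivers the required convergence (indeed differentiability) and simultaneously confirms that $\delta\Lambda_1$ is a genuine Fréchet derivative.
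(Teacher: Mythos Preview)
Your proposal is correct and matches the paper's approach: the paper states the corollary without proof, treating it as an immediate consequence of Theorem~\ref{thm_nonlin} obtained by inserting the one-parameter family $(c_\eps^2,f_\eps)=(c^2+\eps\,\delta c^2,\,f+\eps\,\delta f)$ and differentiating at $\eps=0$. Your careful handling of the remainder term---noting that only continuity of $\tilde u_t$ at $\eps=0$ is needed because the prefactor already carries an $\eps$---is exactly the point that makes the differentiation rigorous, and in fact you supply more detail than the paper does.
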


\begin{proof}[Proof of Theorem~\ref{thm_lin}]
Note that for $(c,f)\in C^\infty$, the operator $R_N$ is smoothing $2N+2$ degrees, i.e., 
\be{w7b}
R_N: H^s_0(\mathcal{K})\to H^{s+2N+2}([0,T]\times\bo),
\ee
 and $Q_N$ is a \PDO\ (in the interior of $\Omega$) with principal symbol $\Delta f(x)|\xi|^{-2}$. So for $\delta c^2$ supported in a compact set in $\Omega$, we can write
\be{w6}
 \delta \Lambda_1\{\delta f,\delta c^2\} = \Lambda_1\!\left( \delta f+\left(c^{-2}\Delta f |D|^{-2}  +\text{l.o.t.} \right)\delta c^2\right) +R_\infty\delta c^2,
\ee
where $R_\infty$ is smoothing, and $\text{l.o.t.}$ stands for ``lower order terms'', i.e., for a pseudo-differential operator of order $-3$, which we can always assume to have a proper support. 

This result is not surprising. All singularities of the kernel of $\delta\Lambda_1$ are of conormal type, at $\dist(x,y)=t$, where $\dist$ is the distance in the metric $c^{-2}\d x^2$. This can be seen from the representation 
\be{l}
\delta\Lambda_1\{\delta c^2,\delta f\} = \Lambda_1\delta f+R\int_0^t U(s)[0,\Delta u(t-s,\cdot)\delta c^2]\,\d s.
\ee
 The operator $Q_N$ can be explained by those singularities. Next, $R_N$ depends on the smooth part of the kernel, and in particular, the behavior of the kernel inside that geodesic ball.  

An important observation is that if the expression in the parentheses in \r{w5} vanishes (and this is an explicit condition on $\delta f$ and $\delta c^2$), then the linearization is very smooth. 
If for a moment we ignore $R_\infty$, we get a kernel of infinite dimension. Indeed, given $\delta c^2$, compactly supported in $\Omega$, we can always find $\delta f$ so that  $ \delta f+\left( c^{-2}\Delta f |D|^{-2} +\text{l.o.t.} \right)\delta c^2=0$. If we are given a compactly supported (in $\Omega$) function $\delta f$, and if $\Delta f\not=0$ on $\supp \delta f$,   we can find $\delta c^2$ at least away from a finitely dimensional space, so that the expression above vanishes as well. The effect of $R_\infty$ will be the following. Even though we still do not know whether we can determine both $\delta f$ and $\delta c^2$, we can claim that even of we could, the problem would be  unstable. 

More precisely, given $N>1$, let $V_N$ be the linear space
\be{V1}
V_N= \left\{ (\delta f, h)\in H_D(\Omega)\times L^2(\mathcal{K}) ;\;  \delta f+ Q_Nh=0\right\}. 
\ee
Then for the linearization $ \delta \Lambda_1\{\delta f,\delta c^2\}$ we have
\be{smooth}
  V_N\ni (\delta f,c^{-2}\delta c^2) \longmapsto  \delta \Lambda_1  \{\delta f,\delta c^2\} \in  H^{2N+2}_{(0)}([0,T]\times\bo).
\ee
This can easily be generalized to $(\delta h,f)$ belonging to negative Sobolev spaces.  
We then complete the proof of Theorem~\ref{thm_lin} by a well known argument, see, e.g., \cite{SU-JFA09} or the remark below. 
\end{proof}

\begin{remark}\label{rem1}
It is easy to see also that $\delta \Lambda_1$ does not  satisfy the following conditional stability estimate either:
\be{H}
\|\delta f\|_{H^{s_1}(\Omega)} + \|\delta c^2\|_{H^{s_1}(\mathcal{K})}\le C\big\|\delta\Lambda_1  \{\delta f,\delta c^2\} \big\|_{H^{s_2}}^{\mu},\quad 0<\mu<1,
\ee
under the condition 
\be{H1}
\|\delta f\|_{H^{s_3}(\Omega)} + \|\delta c^2\|_{H^{s_3}(\mathcal{K})}\le A
\ee
regardless of how we choose $s_1$, $s_2$, $s_3$ and $A>0$. Indeed, fix $\phi\in C_0^\infty$, supported in the interior of $\mathcal{K}$. Set
\[
h_\lambda = \lambda^{-s_3} e^{\i \lambda \omega\cdot x}\phi, 
\]
where $\lambda>0$, $s_3>0$ and $\omega\in S^{n-1}$ are fixed. Let $\delta f_\lambda$ be such that $(\delta f_\lambda,h_\lambda)\in V_N$, $N\gg1$. Then \r{H1} is satisfied. On the other hand, with $c_\lambda^2 = (1+h_\lambda)c^2$,
\[
\big\|\delta\Lambda_1  \{\delta f_\lambda,\delta c^2_\lambda\} \big\|_{H^{s_2}}= 
\|R_Nh_\lambda\|_{H^{s_2}}\le C_N\lambda^{s_2-s_3-2N-2},
\]
see \r{w7b}, while
\[
\|\delta c_\lambda^2\|_{H^{s_1}}\ge \lambda^{s_1-s_3}/C. 
\]
Therefore we get a contradiction with \r{H} for  $N\gg1$.
\end{remark}


\end{document}